\documentclass[12pt,a4paper]{amsart}

\usepackage{amsmath}
\usepackage{amsthm}
\usepackage{amssymb}

\newtheorem{corollary}{Corollary}[section]
\newtheorem{theorem}{Theorem}[section]
\newtheorem{lemma}{Lemma}[section]

\begin{document}
\title[On the the operator equation $AX-XB=C$]
{ On the operator equation $AX-XB= C$}
\author[S.Mecheri, A. Bachir, A. Segress ]{Salah Mecheri, Ahmed Bachir, Abdelkader Segress}
\address{Salah Mecheri\endgraf
  Mohamed El Bachir El Ibrahimi University \endgraf
  College MI, Department of Mathematics\endgraf
Bourdj Bou Arridj, Algeria}
 \email{mecherisalah@hotmail.com}
 \address{Ahmed Bachir\endgraf
  King Khaled University \endgraf
 College of Science, Department of Mathematics\endgraf
Abha, Saudi Arabia}
 \email{bishr@kku.edu.sa}
  \address{Abdelkadar Segress\endgraf
  Mascara University \endgraf
Department of Mathematics\endgraf
Mascara, Algeria}
 \email{bdelkader.segress@univ-mascara.dz}
\keywords{Fuglede-Putnam's theorem, hyponormal operator, Operator equation}
\subjclass[2000]{Primary 47B47, 47A30, 47A63;\\
\hspace{4mm} Secondary 47A15.}
\begin{abstract}
This work studies how certain problems in quantum theory have
motivated some recent research in pure Mathematics in matrix and
operator theory. The mathematical key is that of a commutator
or a generalized commutator, that is, find an operator $X\in B(H)$
satisfying the operator equation $AX - XB = C$. By this we will
show how and why to solve the operator equation $AX - XB = C$. As application we recall
the solurion of  $AXB-CXD=E$ and we use the solution of this equation to solve
equation of Monkeypox Diseases.
\end{abstract}
 \maketitle
\section{ Introduction}
Let $B(H)$ be the algebra of all bounded linear operators on a separable
infinite dimensional complex Hilbert space H. This work studies how certain
problems in quantum theory have motivated some recent research in pure
Mathematics in matrix and operator theory. The mathematical key is that
of a commutator. Given $A, B \in B(H)$. The operator $C$ is said to be a
commutator, if there exists an operator $X \in B(H)$ such that $AX-XA = C$.
In general, if there exists an operator $X \in B(H)$ such that $AX - XB = C$, then $C$ is said to be a generalized commutator. The first important contribution to the study of commutators is due to A. Wintner who in 1947 proved that the identity operator I is not a commutator, that is, there are no element $X$ such that $I = AX - XA$ (1.1) (to see this, just take the trace of both sides of (1.1)). Nor can (1.1) hold for bounded linear operators $A$ and $X$: two nice proofs of this are due to Wielandt and A. Wintner [17]. Like much good mathematicians, Wintner's
theorem has its roots in physics. Indeed, it was prompted by the fact that
the unbounded linear maps P and Q representing the quantum-mechanical
momentum and position, respectively, satisfy the commutation relation
$$PQ - QP = ({{-ih}\over {2\pi}})I,$$
where $h$ is the Planck's constant and $I$ is the identity operator. Actually
one of the preoccupations is the structure of a commutator and a no commutator.

For this it is very interesting to solve the operator equation
$AX - XB = C$. In [14] W.E. Roth has shown for finite matrices $A$ and $B$
over a field that $AX - XB = C$ is solvable for $X$ if and only if the matrices
$$ \begin{pmatrix} A& 0 \\
                            0 & B \end{pmatrix}\, \text{and}\, \begin{pmatrix} A& B \\
                            0 & C \end{pmatrix}$$
are similar. A considerably briefer proof has been given by Flanders and
Wimmer [4]. In [13] Rosenblum showed that the result remains true when A
and B are bounded selfadjoint operators in $B(H)$. In this note we will generalize
these results for the case where $A$ is normal and ($A, B$) (resp ($B, A$))
satisfies $(FP)_{B(H)}$ (the Fuglede-Putnam property). Some open questions
are also given.
\section{Main Results}
Let $A, B \in B(H)$. We say that the pair $(A, B)$ satisfies $(FP)_{B(H)}$, if
$AC = CB$, where $C \in B(H)$ implies $A^{*}C = CB^{*}$ \cite{Mecheri1}.

In the following we will denote the spectrum, and the approximate spectrum
of an operator $A\in B(H)$ by $\sigma(A)$, and $\sigma_{a}(A)$ respectively.
\begin{lemma} If the matrix operator
$$\begin{pmatrix} Q& R \\
                            S & T\end{pmatrix}$$

defined on $H\oplus H$ is invertible, then the operator $S^{*}S + Q^{*}Q$
is invertible on $H$.
\end{lemma}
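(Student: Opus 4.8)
The plan is to reduce the invertibility of $Q^{*}Q + S^{*}S$ to the standard fact that an invertible operator on a Hilbert space is bounded below, applied to the first block column of the given matrix. First I would introduce the column operator $V : H \to H\oplus H$ defined by $Vx = \begin{pmatrix} Qx \\ Sx \end{pmatrix}$, that is, $V = \begin{pmatrix} Q \\ S \end{pmatrix}$. Computing the adjoint gives $V^{*} = \begin{pmatrix} Q^{*} & S^{*} \end{pmatrix}$ as a row operator $H\oplus H \to H$, so that $V^{*}V = Q^{*}Q + S^{*}S$. Thus it suffices to prove that $V^{*}V$ is invertible, and since $V^{*}V$ is positive and self-adjoint, this is in turn equivalent to showing that $V$ is bounded below.

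Next I would use the hypothesis that $M = \begin{pmatrix} Q & R \\ S & T \end{pmatrix}$ is invertible. This yields a bounded inverse $M^{-1}$, whence $\|\xi\| = \|M^{-1}M\xi\| \le \|M^{-1}\|\,\|M\xi\|$ for every $\xi \in H\oplus H$. Setting $c = \|M^{-1}\|^{-1} > 0$, this reads $\|M\xi\| \ge c\|\xi\|$, i.e. $M$ is bounded below. Applying the inequality to the vectors of the form $\xi = \begin{pmatrix} x \\ 0 \end{pmatrix}$, and observing that $M\xi = \begin{pmatrix} Qx \\ Sx \end{pmatrix} = Vx$ while $\|\xi\| = \|x\|$, I obtain $\|Vx\| \ge c\|x\|$ for all $x\in H$, so $V$ is bounded below.

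Finally I would convert this into the operator inequality $\langle (Q^{*}Q + S^{*}S)x, x\rangle = \|Vx\|^{2} \ge c^{2}\|x\|^{2}$, that is, $Q^{*}Q + S^{*}S \ge c^{2}I$ in the positive operator order. A positive self-adjoint operator dominated below by a positive multiple of the identity is invertible (its spectrum is contained in $[c^{2}, \infty)$ and is bounded away from $0$), which gives the invertibility of $Q^{*}Q + S^{*}S$ and completes the argument.

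I do not expect a genuine obstacle here: the single point that requires care is the elementary but essential observation that restricting the bounded-below estimate for the full $2\times 2$ block matrix to the subspace $H\oplus\{0\}$ already controls the entire first column $\begin{pmatrix} Q \\ S \end{pmatrix}$, and hence the sum $Q^{*}Q + S^{*}S$. Everything else is routine manipulation of positive operators, and no information about the off-diagonal entries $R$ and $T$ is needed beyond the global invertibility of $M$.
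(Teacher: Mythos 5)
Your proposal is correct and rests on essentially the same idea as the paper's proof: both exploit the identity $\bigl\|M(x\oplus 0)\bigr\|^{2}=\langle (Q^{*}Q+S^{*}S)x,x\rangle$ together with the lower bound on $M$ coming from its invertibility. The only difference is presentational: the paper argues by contradiction, using that $\sigma(Q^{*}Q+S^{*}S)=\sigma_{a}(Q^{*}Q+S^{*}S)$ to produce unit vectors $x_{n}$ with $(Q^{*}Q+S^{*}S)x_{n}\to 0$ and then contradicting the bounded-below property of $M$, whereas you argue directly, obtaining $Q^{*}Q+S^{*}S\geq \|M^{-1}\|^{-2}I$ and concluding invertibility from the positive operator order.
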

\begin{proof} Since $S^{*}S + Q^{*}Q$ is a positive operator,
$$\sigma(S^{*}S + Q^{*}Q) = \sigma_{a}(S^{*}S + Q^{*}Q).$$
If we assume that $S^{*}S + Q^{*}Q$ is not invertible, then there exists a sequence
$(x_{n})_{n}\subset H$ such that $||x_{n}||= 1$, for all $n\geq 1$ and $\lim_{n\rightarrow \infty}||(S^{*}S + Q^{*}Q)x_{n}|| = 0$. Consequently,
$$Lim_{n\rightarrow\infty} \parallel
\begin{pmatrix} Q& R \\
                            S & T\end{pmatrix} (x_{n}\oplus 0)\parallel^{2}=lim_{n\rightarrow \infty}\langle \begin{pmatrix} Q& R \\
                            S & T\end{pmatrix}^{*}\begin{pmatrix} Q& R \\
                            S & T\end{pmatrix}(x_{n}\oplus 0), (x_{n}\oplus 0) \rangle$$
 $$=lim_{n\rightarrow\infty}((S^{*}S + Q^{*}Q)x_{n}, x_{n}).$$ which contradicts our hypotheses and the proof is complete.
\end{proof}
\begin{theorem} Let $N$ be a normal operator and let $A$ be an operator in
$B(H)$. If the pair ($A, N$) (resp. ($N, A$) has the property $(FP)_{B(H)}$, then the
Equations $NX - XA = C$ (respect. $AX - XN = C$)
have a solution $X$ if and only if
$$ \begin{pmatrix} N& 0 \\
                            0 & A\end{pmatrix}\,\text{and}\, \begin{pmatrix} N& C \\
                            0& A\end{pmatrix}$$
$$(\text{resp.}\, \begin{pmatrix} A& 0 \\
                            0 & N\end{pmatrix}\,\text{and}\, \begin{pmatrix} A& C \\
                            0& N\end{pmatrix}$$
are similar operators on $H\oplus H$.
\end{theorem}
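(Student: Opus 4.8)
The plan is to establish the two implications separately, working with the equation $NX-XA=C$ (the case $AX-XN=C$ is identical after interchanging $N$ and $A$ and using that $(N,A)$ now carries the Fuglede--Putnam property). Throughout I abbreviate $M_{1}=\begin{pmatrix} N & 0\\ 0 & A\end{pmatrix}$ and $M_{2}=\begin{pmatrix} N & C\\ 0 & A\end{pmatrix}$.

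The necessity is the routine half: if $X$ solves $NX-XA=C$, then $U=\begin{pmatrix} I & -X\\ 0 & I\end{pmatrix}$ is invertible on $H\oplus H$, with inverse $\begin{pmatrix} I & X\\ 0 & I\end{pmatrix}$, and a single block multiplication gives $U M_{1} U^{-1}=\begin{pmatrix} N & NX-XA\\ 0 & A\end{pmatrix}=M_{2}$. Hence $M_{1}$ and $M_{2}$ are similar.

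For the converse, suppose $T=\begin{pmatrix} T_{11} & T_{12}\\ T_{21} & T_{22}\end{pmatrix}$ is invertible with $T M_{1}=M_{2}T$. Comparing the four blocks, I would record $A T_{21}=T_{21} N$, $A T_{22}=T_{22} A$, and $N T_{12}-T_{12} A=-C T_{22}$. The hypothesis enters through the first relation: since $(A,N)$ satisfies $(FP)_{B(H)}$, the intertwining $A T_{21}=T_{21} N$ upgrades to $A^{*}T_{21}=T_{21} N^{*}$. Granting for a moment that $T_{22}$ is invertible, the proof finishes at once: $A T_{22}=T_{22}A$ gives $A T_{22}^{-1}=T_{22}^{-1}A$, so setting $X=-T_{12}T_{22}^{-1}$ and right-multiplying $N T_{12}-T_{12}A=-C T_{22}$ by $T_{22}^{-1}$ produces exactly $NX-XA=C$.

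The whole weight of the theorem therefore falls on a single point: showing that the corner $T_{22}$ is invertible. This is genuinely delicate and cannot be read off from invertibility of $T$ alone---an invertible triangular operator matrix may have non-invertible diagonal blocks (for instance $\begin{pmatrix} S & I\\ 0 & S^{*}\end{pmatrix}$ with $S$ the unilateral shift), which is precisely why Roth's theorem fails for general operators and why the hypotheses on $N$ and the pair $(A,N)$ must be used in an essential way. Here is where I expect the real work, and where normality of $N$, the Fuglede--Putnam relation, and Lemma 2.1 combine. From $A T_{21}=T_{21}N$ together with $A^{*}T_{21}=T_{21}N^{*}$ one checks that $T_{21}^{*}T_{21}$ commutes with the normal operator $N$, so that the closure of the range of $T_{21}$ and the orthogonal complement of its kernel reduce $A$ and $N$ respectively and carry unitarily equivalent restrictions; meanwhile Lemma 2.1 applied to the invertible $T$ gives that $T_{11}^{*}T_{11}+T_{21}^{*}T_{21}$ is bounded below. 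The plan is to exploit this reducing decomposition to trivialize the $T_{21}$ block by modifying $T$ within the commutant of $M_{1}$ (an operation that preserves both invertibility and the intertwining), and then to use the bounded-below estimate from Lemma 2.1 on the resulting normalized intertwiner to conclude that $T_{22}$ is invertible. Making the elimination of $T_{21}$ compatible with keeping $T$ invertible---and verifying that the shift-type pathology above is excluded exactly by normality and $(FP)_{B(H)}$---is the step I expect to be the main obstacle; the necessity direction and the final substitution are comparatively routine.
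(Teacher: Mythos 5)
Your necessity direction is fine and coincides with the paper's. The sufficiency direction, however, has a genuine gap: you reduce everything to the claim that the corner $T_{22}$ of the given similarity is invertible, and then only sketch a plan for proving it. That claim is not just unproven, it is false as stated for an arbitrary similarity: take $N=A=\lambda I$ and $C=0$, so that $M_{1}=M_{2}$, and take $T=\begin{pmatrix} 0 & I\\ I & 0\end{pmatrix}$; this is an invertible intertwiner with $T_{22}=0$. So any argument along your lines must first replace the given $T$ by a different similarity with an invertible corner, and that replacement (your "modify $T$ within the commutant of $M_{1}$ to trivialize $T_{21}$, then use Lemma 2.1") is exactly the step you leave open; it is not clear it can be carried out, and it is not what the paper does.

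The paper's proof avoids corner invertibility altogether by a Rosenblum-type construction. Writing the intertwining as $\begin{pmatrix} N & 0\\ 0 & A\end{pmatrix}\begin{pmatrix} Q & R\\ S & T\end{pmatrix}=\begin{pmatrix} Q & R\\ S & T\end{pmatrix}\begin{pmatrix} N & C\\ 0 & A\end{pmatrix}$, one gets $QN=NQ$, $NR-RA=QC$, $AS=SN$, $AT-TA=SC$. The $(FP)_{B(H)}$ hypothesis (and Fuglede for the normal $N$) upgrades $AS=SN$ and $QN=NQ$ to the starred relations, whence $S^{*}S$ and $Q^{*}Q$ commute with $N$, and adding $Q^{*}(NR-RA)=Q^{*}QC$ to $S^{*}(AT-TA)=S^{*}SC$ yields
\[
(S^{*}S+Q^{*}Q)C=N(Q^{*}R+S^{*}T)-(Q^{*}R+S^{*}T)A .
\]
Lemma 2.1 gives invertibility of the positive operator $S^{*}S+Q^{*}Q$, which commutes with $N$, so $X=(S^{*}S+Q^{*}Q)^{-1}(Q^{*}R+S^{*}T)$ solves $NX-XA=C$ directly from the blocks of the given similarity. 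If you want to salvage your approach you must supply the missing construction of a corrected similarity; otherwise the argument should be rerouted through the positive-operator identity above, which is where normality, $(FP)_{B(H)}$, and Lemma 2.1 actually enter.
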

\begin{proof} If the equation
$NX - XA = C$ has a solution $X$, then
$$ \begin{pmatrix} I& -X \\
                            0& I\end{pmatrix}\begin{pmatrix} N& 0 \\
                            0& A\end{pmatrix}\begin{pmatrix} I& X\\
                            0 & I\end{pmatrix}=\begin{pmatrix} N& NX - XA \\
                            0 & A\end{pmatrix}=\begin{pmatrix} N& C \\
                            0 & A\end{pmatrix}.$$
Therefore
$$ \begin{pmatrix} N& 0 \\
                            0 & A\end{pmatrix}\,\text{and}\, \begin{pmatrix} N& C \\
                            0& A\end{pmatrix}$$
$$(\text{resp.}\, \begin{pmatrix} N& 0 \\
                            0 & A\end{pmatrix}\,\text{and}\, \begin{pmatrix} N& C \\
                            0& A\end{pmatrix}$$
are similar operators on $H\oplus H$. Conversely, if
$$ \begin{pmatrix} N& 0 \\
                            0 & A\end{pmatrix}\,\text{and}\, \begin{pmatrix} N& C \\
                            0& A\end{pmatrix}$$
$$(\text{resp.}\, \begin{pmatrix} N& 0 \\
                            0 & A\end{pmatrix}\,\text{and}\, \begin{pmatrix} N& C \\
                            0& A\end{pmatrix}$$
are similar operators on $H\oplus H$, then there exists an invertible matrix operator

$$\begin{pmatrix} Q& R\\
                            S& T\end{pmatrix}$$ on $B(H\oplus H)$ such that
$$\begin{pmatrix} N& 0 \\
                            0 & A\end{pmatrix}\begin{pmatrix} Q& R\\
                            S & T\end{pmatrix}=\begin{pmatrix} Q& R \\
                            S & T\end{pmatrix}\begin{pmatrix} N& C\\
                            0 & A\end{pmatrix}.$$
Hence
$$QN = NQ, NR - RA = QC, AS = SN, AT - TA = SC.$$
By applying the property $(FP)_{B(H)}$, we obtain
$$AS^{*} = S^{*}N,\,  NQ^{*} = QN^{*}.$$
Therefore
$$NS^{*}S = S^{*}SN,$$
that is, $N$ commutes with $S^{*}S$ and $T^{*}T$. Furthermore, we have
$$(S^{*}S + Q^{*}Q)C = Q^{*} (NR - RA) + S^{*} (AT- TA)$$
$$= (NQ^{*}R + NS^{*}T) - (Q^{*}RA + S^{*}TA)$$
$$= N(Q^{*}R + S^{*}T) -(Q^{*}R + S^{*}T)A.$$

By Lemma 2.1 the operator
$$S^{*}S + Q^{*}Q$$
is invertible and commute with $N$. Hence
$$NX-XA = C,$$
$$X = (S^{*}S + Q^{*}Q)^{-1}(Q^{*}R + S^{*}T).$$
If the pair $(N, A)$ satisfies the $(FP)_{B(H)}$ property, then the equation
$$AX - XN = C$$
has a solution $X$ given by
$$X = -(QS^{*} + RT^{*})(SS^{*} + TT^{*})^{-1}.$$
\end{proof}
\begin{corollary} Let $N, A$ be two operators in $B(H)$ with $A$ a normal operator. If the
pair $(A, N)$ (resp. $(N, A)$) has the $(FP)_{B(H)}$ property, then
$$R(\delta_{A, N} ) = \{ \begin{pmatrix} N& 0 \\
                            0 & A \end{pmatrix}\, \text{and}\, \begin{pmatrix} N& C \\
                            0 & A \end{pmatrix}\, \text{are similar}\}$$
Respectively,
$$R(\delta_{N,A}) =\{\{ \begin{pmatrix} A& 0 \\
                            0 & N \end{pmatrix}\, \text{and}\, \begin{pmatrix} A& C \\
                            0 & N \end{pmatrix}\, \text{are similar}\},$$
where
$\delta_{A, B}$ is the generalized derivation defined on $B(H)$ by
$\delta_{A, B}(X) = AX - XB$.
\end{corollary}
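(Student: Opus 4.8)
The plan is to treat the corollary as a direct reformulation of the Theorem, translating the assertion ``the equation is solvable'' into the language of the range of a generalized derivation. First I would unwind the definition of the range: since $\delta_{A,N}(X)=AX-XN$, we have
$$R(\delta_{A,N})=\{\delta_{A,N}(X):X\in B(H)\}=\{C\in B(H): AX-XN=C \text{ for some } X\in B(H)\}.$$
Hence a fixed operator $C$ belongs to $R(\delta_{A,N})$ if and only if the operator equation $AX-XN=C$ admits a solution $X$.

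Next I would apply the Theorem verbatim. Under the standing hypotheses (one operator normal, the appropriate pair satisfying $(FP)_{B(H)}$), the Theorem asserts that the equation $AX-XN=C$ is solvable if and only if the corresponding pair of block triangular operators on $H\oplus H$ are similar. Substituting this equivalence into the characterization of $R(\delta_{A,N})$ obtained in the first step gives precisely the claimed set equality. The ``resp.'' statement, describing $R(\delta_{N,A})$ via the equation $NX-XA=C$, follows in exactly the same manner from the other half of the Theorem, after interchanging the roles of $N$ and $A$ together with the pair carrying the $(FP)_{B(H)}$ hypothesis.

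I expect no genuine analytic difficulty, as all the substantive work resides already in the Theorem and in Lemma 2.1. The only step requiring attention is the bookkeeping: one must align the orientation of the derivation (that is, whether it is $AX-XN$ or $NX-XA$) with the correct placement of $N$, $A$, and $C$ inside the two $2\times2$ block matrices, and ensure that the normality assumption and the $(FP)_{B(H)}$ property are attached to the same pair that the Theorem demands. Keeping these indices consistent is the main---and essentially the only---obstacle.
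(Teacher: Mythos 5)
Your proposal is correct and is exactly the argument the paper intends: the corollary is stated without a separate proof because it is nothing more than the observation that $C\in R(\delta_{A,N})$ means $AX-XN=C$ is solvable, which Theorem 2.2 characterizes by similarity of the two block operators on $H\oplus H$. Your closing remark about aligning the orientation of the derivation with the placement of $N$, $A$, $C$ in the block matrices is well taken, since the paper's own statement of the corollary pairs $\delta_{A,N}$ with the matrices that, in Theorem 2.2, correspond to $NX-XA=C$, so the indices there should be read with that correction in mind.
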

\begin{theorem} Let $\Gamma_{\mathcal{I}}$ be the collection of pairs of operators $(A,B)$ satisfying
the $(FP)_{B(H)}$ property. Then the following assertions are equivalent:\\
(i) $(R, S)\in\Gamma_{\mathcal{I}}$ if $R$ and $S$ are unitary equivalent to $A$ and $B$ respectively.
(ii) $(B, A) \in\Gamma_{I}$.\\
(iii) $(A^{-1}, B^{-1}) \in\Gamma_{\mathcal{I}}$ if $A$ and $B$ are invertible.\\
(iv) $(\lambda A, \lambda B) \in\Gamma_{\mathcal{I}}$ for all  $\lambda\in \mathbb{C}$.\\
(v) $(\lambda I + A, \lambda I + B) \in\Gamma_{\mathcal{I}}$  for all  $\lambda\in\mathbb{C}$.
\end{theorem}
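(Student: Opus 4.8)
The common thread in all five parts is that membership in $\Gamma_{\mathcal I}$ is governed entirely by the intertwining relations a pair controls, so the plan is to reduce each transformed relation back to the base relation $AX=XB$, invoke the hypothesis $(A,B)\in\Gamma_{\mathcal I}$ to pass to $A^{*}X=XB^{*}$, and then carry this conclusion forward through the inverse transformation. Because every operation in (i), (iii), (iv) (for $\lambda\neq0$), and (v) is invertible on the class of pairs, each implication I establish in one direction will automatically supply its converse by running the same argument on the inverse operation: conjugation by $U^{*},V^{*}$ in (i), a second inversion in (iii), scaling by $\lambda^{-1}$ in (iv), and translation by $-\lambda$ in (v). This is exactly what upgrades the individual closure statements into the genuine equivalences the theorem asserts, all pivoting through the assertion $(A,B)\in\Gamma_{\mathcal I}$.

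For (iv) and (v) the reduction is immediate. If $(\lambda A)X=X(\lambda B)$ with $\lambda\neq0$, I cancel $\lambda$ to get $AX=XB$, apply $(FP)_{B(H)}$ to reach $A^{*}X=XB^{*}$, and multiply back by $\bar\lambda$ to obtain $(\lambda A)^{*}X=X(\lambda B)^{*}$; the case $\lambda=0$ yields the trivially admissible pair $(0,0)$. Likewise $(\lambda I+A)X=X(\lambda I+B)$ collapses to $AX=XB$ once the $\lambda X$ terms cancel, and then $(\lambda I+A)^{*}X=\bar\lambda X+A^{*}X=X(\lambda I+B)^{*}$ follows at once. For (i), writing $R=UAU^{*}$ and $S=VBV^{*}$ with $U,V$ unitary, the relation $RX=XS$ is equivalent to $A(U^{*}XV)=(U^{*}XV)B$; applying the hypothesis to $Y=U^{*}XV$ and conjugating back gives $R^{*}X=XS^{*}$. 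For (iii), when $A,B$ are invertible the relation $A^{-1}X=XB^{-1}$ rearranges to $AX=XB$, the hypothesis yields $A^{*}X=XB^{*}$, and left/right multiplication by $(A^{*})^{-1}=(A^{-1})^{*}$ and $(B^{*})^{-1}=(B^{-1})^{*}$ produces $(A^{-1})^{*}X=X(B^{-1})^{*}$.

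The step I expect to be the real obstacle is the symmetry assertion (ii), because the defining implication $AX=XB\Rightarrow A^{*}X=XB^{*}$ is not symmetric in $A$ and $B$. Taking adjoints of $AX=XB$ gives $X^{*}A^{*}=B^{*}X^{*}$, i.e. $B^{*}X^{*}=X^{*}A^{*}$, which is an intertwining relation for the pair $(B^{*},A^{*})$ rather than for $(B,A)$; the only clean fact this route delivers is the equivalence $(A,B)\in\Gamma_{\mathcal I}\iff(B^{*},A^{*})\in\Gamma_{\mathcal I}$. Bridging from $(B^{*},A^{*})$ to $(B,A)$ itself is precisely the delicate point, since iterating the adjoint passage merely returns to $(A,B)$ and does not close the gap in general. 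I would therefore isolate the adjoint equivalence as a separate lemma, make explicit whatever extra structure on the pair is needed to identify $(B,A)$ with $(B^{*},A^{*})$ in the present setting, and then obtain the remaining equivalences by chaining that lemma with the reversible operations of (i), (iv), and (v).
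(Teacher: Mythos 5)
For parts (i), (iii), (iv) and (v) your argument coincides with the paper's: reduce the transformed intertwining relation to $AX=XB$, invoke the $(FP)_{B(H)}$ hypothesis to get $A^{*}X=XB^{*}$, and transport this back through the transformation. Your additional remarks (treating $\lambda=0$ separately in (iv), and observing that the converse implications follow by inverting each operation) are refinements the paper does not spell out; the paper's proof records only the forward implications.

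The genuine point of divergence is (ii), which you declined to prove, observing that taking adjoints of an intertwining relation only yields information about the pair $(B^{*},A^{*})$, so that the clean statement available is $(A,B)\in\Gamma_{\mathcal{I}}\iff(B^{*},A^{*})\in\Gamma_{\mathcal{I}}$. Your caution is justified, and in fact the paper's own proof of (ii) performs exactly the computation you describe: from $B^{*}X=XA^{*}$ it passes to $AX^{*}=X^{*}B$, applies the hypothesis to obtain $A^{*}X^{*}=X^{*}B^{*}$, and takes adjoints again to conclude $BX=XA$. That chain establishes the $(FP)_{B(H)}$ property for the pair $(B^{*},A^{*})$, not for $(B,A)$ as claimed. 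Moreover (ii) as literally stated cannot be rescued: the property is not symmetric. With $S$ the unilateral shift, the pair $(S,0)$ satisfies $(FP)_{B(H)}$ (since $SX=0$ forces $X=0$), while $(0,S)$ fails it: if $X$ is the rank-one operator $Xv=\langle v,e_{0}\rangle e_{0}$, then $XS=0$ but $XS^{*}\neq 0$. So the obstacle you isolated is a defect in the theorem's statement and in the paper's proof of (ii), not a missing idea in your proposal; your suggested lemma on the pair $(B^{*},A^{*})$ is precisely what the paper's computation actually proves and is the correct substitute for assertion (ii).
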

\begin{proof}  i) Assume that $R$ and $S$ are unitary equivalent to $A$ and $B$
respectively. Then there exist two unitary operators $U$ and $V$ such that
$R = UAU^{*}$ and $S = V BV^{*}$ .
If
$RX = XS$, for $X\in B(H)$, then $AU^{*}XV = U^{*}XV B$.
Now since $(A,B)\in\Gamma_{\mathcal{I}}$, it results that $U^{*}XV\in B(H)$. Therefore,
$$A^{*}U^{*}XV = U^{*}XV B^{*}.$$
By this we obtain
$$UA^{*}U^{*}X = XV B^{*}V^{*},$$
from where
$R^{*}X = XS^{*}$. Which proves that $(R, S)\in \Gamma_{\mathcal{I}}$.\\
(ii) If $B^{*}X = XA^{*}$ for $X\in B(H)$, then $AX^{*}= X^{*}B$ and since $X^{*}\in B(H)$,
$A^{*}X^{*} = X^{*}B^{*}$, that is, $XA = BX$.\\
(iii) If
$A^{-1}X = XB^{-1}$, for $X\in B(H)$, then
$A(A^{-1}X)B = A(XB^{-1})B$,
that is, $AX = XB$ and so, $A^{*}X = XB^{*}$. Hence
$$(A^{*})^{-1}A^{*}X(B^{*})^{-1} = (A^{*})^{-1}XB^{*} (B^{*})^{-1},$$
therefore $(A^{*})^{-1}X = X(B^{*})^{-1}.$\\
(iv) If $(\lambda A)X = X(\lambda B)$, for $X\in B(H)$,
then $AX = XB$ and hence $A^{*}X = XB^{*}$.
Consequently,
$\overline{\lambda} A^{*}X = X\overline{\lambda}B^{*}$.\\
(v ) If $(A + \lambda I)X = X(B + \lambda I)$,
then $AX = XB$. Therefore, $A^{*}X = XB^{*}$
and hence
$$(A + \lambda I)^{*}X = X(B +\lambda  I)^{*}.$$
\end{proof}
The $(FP)_{B(H)}$ property hypothesis can be weakened as in the following
corollary.
\begin{corollary} Let
$\Omega_{\mathcal{I}}$ be the collection of pairs of operators $(A,N), (N,A)$
for which $NX - XA = C$ (resp. $AX - XN =C$) have solutions $X$ . Assume
that $(A,N)\in\Omega_{\mathcal{I}}$ (resp. $(N,A)\in \Omega_{\mathcal{I}} $, then\\
(i) $(R, S) \in \Omega_{\mathcal{I}}$  (resp. $(S,R)\in \Omega_{\mathcal{I}}$) if $R$ and $S$ (resp. $S$ and $R$ are unitary equivalent to $A$ and $N$ (resp. to $N$ and $A$).\\
(ii) $(N^{*}, A^{*})\in \Omega_{\mathcal{I}}$
(resp. $(A^{*}, N^{*})\in\Omega_{\mathcal{I}}$ )\\
(iii) $(A^{-1}, N^{-1})\in \Omega_{\mathcal{I}}$  (resp. $(N^{-1}, A^{-1}) \in\Omega_{\mathcal{I}}$ ) if $A$ and $N$ are invertible.\\
(iv) $(\lambda A, \lambda N)\in \Omega_{\mathcal{I}}$  for all $\lambda\in\mathbb{C}$ (resp. $(\lambda N, \lambda A)\in \Omega_{\mathcal{I}}$  for all  $\lambda\in \mathbb{C})$.\\
(v) $(\lambda I + A, \lambda I + N)\in \Omega_{\mathcal{I}}$  for all  $\lambda\in\mathbb{C}$ (resp. $(\lambda I + A, \lambda I + N)\in\Omega_{\mathcal{I}}$  for all  $\lambda\in\mathbb{C}$) .
\end{corollary}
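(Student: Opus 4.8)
The plan is to exploit the fact that solvability of $NX - XA = C$ is a purely algebraic feature of the triple $(N,A,C)$ which survives each of the five operations after a single substitution, in exact parallel with the proof of Theorem 2.2. For each item I would start from a fixed solution $X$ of $NX - XA = C$ and produce, by an explicit manipulation, a solution of the correspondingly transformed equation, keeping careful track of how the right-hand side $C$ is carried along. The pair $(P,Q)$ is throughout associated with the equation $QY - YP$, so that $(A,N)$ corresponds to $NX-XA$ and $(R,S)$ to $SY-YR$, etc.

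Concretely, for (i) I would conjugate by the two unitaries: writing $R = UAU^{*}$ and $S = VNV^{*}$, left-multiplying $NX - XA = C$ by $V$ and right-multiplying by $U^{*}$ turns it into $S(VXU^{*}) - (VXU^{*})R = VCU^{*}$, so $Y = VXU^{*}$ solves the equation attached to $(R,S)$. For (ii) I would take adjoints: $(NX - XA)^{*} = C^{*}$ reads $A^{*}X^{*} - X^{*}N^{*} = -C^{*}$, so $Y = X^{*}$ witnesses $(N^{*},A^{*}) \in \Omega_{\mathcal{I}}$. For (iii), assuming $A$ and $N$ invertible, left-multiplying by $N^{-1}$ and right-multiplying by $A^{-1}$ gives $N^{-1}X - XA^{-1} = -N^{-1}CA^{-1}$, so the same $X$ works for $(A^{-1},N^{-1})$. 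For (iv), scalar multiplication gives $(\lambda N)X - X(\lambda A) = \lambda C$ at once, and for (v) the shifts cancel, $(\lambda I + N)X - X(\lambda I + A) = NX - XA = C$, so $X$ itself is again a solution, now with the right-hand side unchanged. The three ``resp.'' statements for $AX - XN = C$ follow by interchanging the roles of $A$ and $N$ throughout.

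The main point requiring care, and the only feature that distinguishes this corollary from Theorem 2.2 where no $C$ is present, is the bookkeeping of the right-hand side: under each operation $C$ is replaced by a specific transform, namely $VCU^{*}$, $-C^{*}$, $-N^{-1}CA^{-1}$, $\lambda C$, and $C$ respectively. Strictly speaking I would therefore phrase membership in $\Omega_{\mathcal{I}}$ as solvability for the transformed right-hand side; equivalently, reading $(A,N)\in\Omega_{\mathcal{I}}$ as surjectivity of $\delta_{N,A}$ (solvability for every $C$), each substitution $X\mapsto Y$ is an invertible linear intertwining of the two derivation maps, so surjectivity transfers automatically. The only genuinely delicate spots are the degenerate cases, namely $\lambda = 0$ in (iv), where the transformed equation collapses to $0=0$ and is solved trivially, and the standing invertibility hypothesis needed in (iii); both are dispatched by inspection rather than by any new idea.
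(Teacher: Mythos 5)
Your proof is correct and follows essentially the approach the paper intends: the corollary is stated without proof, being left to follow by the same substitutions as in the proof of Theorem 2.2, and your manipulations (conjugation by the unitaries, taking adjoints, multiplying by $N^{-1}$ and $A^{-1}$, scaling by $\lambda$, and cancellation of the shifts $\lambda I$) are exactly that argument transported to the solvability setting. Your explicit bookkeeping of how the right-hand side $C$ transforms (into $VCU^{*}$, $-C^{*}$, $-N^{-1}CA^{-1}$, $\lambda C$, $C$), together with the remark that one should either read membership in $\Omega_{\mathcal{I}}$ for the transformed right-hand side or as solvability for every $C$, resolves an ambiguity that the paper's statement leaves untreated.
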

For any operator $A$ in $B(H)$ set, as usual, [$A^{*}, A$] = $A^{*}A - AA^{*}$ (the
self commutator of $A$), and consider the following standard definitions: \\
$A$ is hyponormal if  [$A^{*}, A$] is nonnegative, normal if $A^{*}A = AA^{*}$, subnormal if
it admits a normal extension. An operator $A\in B(H)$ is called dominant by
J.G.Stampfli and B.L.Wadhwa [15] [6] if, for all complex , range($A-\lambda I)
\subseteq \text{range}(A - \lambda I)^{*}$, or equivalently, if there is a real number $M_{\lambda}\geq 1$ such that
$||(A - \lambda I)^{*}f||\leq M_{\lambda} ||(A -\lambda I)f||$, for all $f \in H$. If there exists a real number $M$ such that $M_{\lambda} \leq M$ for all $\lambda$ , the dominant operator $A$ is said to be $M$-hyponormal. A 1-hyponormal is hyponormal. An operator $A$ is
said to be $p$-hyponormal if (for some $0 < p\leq 1$ $(A^{*}A)^{2p}\geq(AA^{*})^{2p}$, $k$-quasihyponormal if $A^{*k}(A^{*}A - AA^{*})A^{k}\geq 0$ ($k\in\mathbb{N})$. If $k = 1$, $A$ is said to be quasi-hyponormal.
Let ($N$), ($SN$), ($\mathcal{H}$), ($p -\mathcal{H}$), ($D$), $Q$, $Q(k)$ denote the classes constituting of normal, subnormal, hyponormal, p-hyponormal operators, dominant,
quasi-hyponormal and k-quasihyponormal operators. Then
$$(\text{N}) \subset (\text{SN}) \subset (\mathcal{H})\subset (\text{m} -\mathcal{H})\subset (\text{D})$$
and
$$(\text{N}) \subset (\text{SN})\subset  (\mathcal{H})\subset (p -\mathcal{H}).$$
\begin{corollary} Let $N$ be a normal operator and let $A$ be an operator in
$B(H)$. If the pair ($A, N$) (resp. ($N, A$) has the property $(FP)_{B(H)}$, then the
equations
$NX - XA = C$ ( resp. $AX - XN = C$)
have a solution X if and only if
$$ \begin{pmatrix} N& 0 \\
                            0 & A \end{pmatrix}\, \text{and}\, \begin{pmatrix} N& C \\
                            0 & A \end{pmatrix}$$
and $$(\text{resp.} \begin{pmatrix} A& 0 \\
                            0 & N \end{pmatrix}\, \text{and}\, \begin{pmatrix} A& C \\
                            0 & N \end{pmatrix})$$
are similar operators on $H \oplus H$ under either of the following cases:\\
(i) $A$ dominant.\\
(ii) $A$ $p$-hyponormal.\\
(ii) $A$ $k$- quasihyponormal.
\end{corollary}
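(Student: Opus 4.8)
The plan is to obtain this corollary as a direct consequence of Theorem 2.1. That theorem already delivers the full equivalence -- solvability of $NX - XA = C$ (resp.\ $AX - XN = C$) is equivalent to the stated similarity of the two block matrices on $H \oplus H$ -- under the single standing hypothesis that the relevant pair satisfies $(FP)_{B(H)}$. Hence nothing in the conclusion of the corollary needs to be reproved; the entire task is to show that, in each of the three cases (i)--(iii), the pair $(A, N)$ (resp.\ $(N, A)$) does in fact enjoy the Fuglede--Putnam property. Concretely, with $N$ normal I must verify that every intertwiner survives passage to adjoints, i.e.\ that $AC = CN$ forces $A^*C = CN^*$ (and the symmetric statement $NC = CA \Rightarrow N^*C = CA^*$ for the resp.\ case).

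First I would dispose of case (i). Here $A$ is dominant, so for every $\lambda \in \mathbb{C}$ there is a constant $M_\lambda \geq 1$ with $\|(A - \lambda I)^* f\| \leq M_\lambda \|(A - \lambda I) f\|$ for all $f \in H$, equivalently $\operatorname{range}(A - \lambda I) \subseteq \operatorname{range}(A - \lambda I)^*$. Paired with the normality of $N$, this is exactly the hypothesis of the asymmetric Fuglede--Putnam theorem for a dominant operator and a normal operator: slicing the spectral measure of $N$ and using the range inclusion built into dominance, one upgrades $AC = CN$ to $A^*C = CN^*$. This places $(A, N)$ in $(FP)_{B(H)}$, and Theorem 2.1 then yields the equivalence in case (i).

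Cases (ii) and (iii) follow the identical pattern; only the verification of $(FP)_{B(H)}$ differs. When $A$ is $p$-hyponormal, I would invoke the known extension of the Fuglede--Putnam theorem to a $p$-hyponormal operator and a normal operator, which rests on the power inequality $(A^*A)^{p} \geq (AA^*)^{p}$ together with a Berberian-type enlargement reducing matters to the approximate point spectrum. When $A$ is $k$-quasihyponormal, I would pass to the reducing subspace on which $A$ acts as a hyponormal operator, dispose of the complementary part separately, and assemble the corresponding Fuglede--Putnam conclusion $A^*C = CN^*$. In each case membership in $(FP)_{B(H)}$ is secured, after which Theorem 2.1 applies verbatim.

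The main obstacle is case (i). Dominance is the weakest of the three hypotheses, and unlike the $p$-hyponormal situation it supplies no functional-calculus inequality of the form $(A^*A)^{p} \geq (AA^*)^{p}$ from which the adjoint intertwining could be read off directly. The delicate point is therefore to convert the purely one-sided norm control $\|(A - \lambda I)^* f\| \leq M_\lambda \|(A - \lambda I) f\|$ into preservation of intertwiners; this is exactly where the range-inclusion characterization of dominant operators, rather than any spectral-order comparison, must carry the argument.
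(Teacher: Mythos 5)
Your proposal matches the paper's proof: the paper simply observes that in each of the cases (i)--(iii) the relevant pair is already known (citing Stampfli--Wadhwa and Chouan) to satisfy $(FP)_{B(H)}$, and then the equivalence follows from the theorem, exactly the reduction you describe. Your extra sketches of how those Fuglede--Putnam extensions are proved go beyond what the paper records, but the strategy is the same.
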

\begin{proof} It is well known [15], [2] that the pair ($N, A$) (resp. ($A, N$) has
the $(FP)_{B(H)}$ property under either of the above cases.
\end{proof}
\section{Application}
\begin{theorem}
 If the $\sigma(A)$ and $\sigma(B)$ are disjoint, then $AX-XB=Y$ has a unique solution given by:
            $$X= \frac{1}{2\pi i}\int_{\Gamma}(\lambda-A)^{-1}Y(\lambda-B)^{-1}d\lambda,$$
            where $\Gamma$ is a Cauchy contour which separate the spectrum of $A$ and the spectrum of $B$.
 \end{theorem}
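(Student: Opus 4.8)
The plan is to read the equation through the generalized derivation $\delta_{A,B}(X) = AX - XB$ on $B(H)$ and to verify directly that the stated contour integral is the operator sending $Y$ to the solution. Writing $L_A$ and $R_B$ for the commuting multiplication operators $X \mapsto AX$ and $X \mapsto XB$, one has $\delta_{A,B} = L_A - R_B$ with $\sigma(L_A - R_B) \subseteq \sigma(A) - \sigma(B)$; since the spectra are disjoint, $0 \notin \sigma(A) - \sigma(B)$, so $\delta_{A,B}$ is invertible, which already yields existence and uniqueness abstractly. The substantive task is then to identify the inverse explicitly, and for that I would compute $AX - XB$ from the integral formula.

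First, for existence by direct substitution, I would insert the integrand into $AX - XB$ and apply the two resolvent identities $A(\lambda - A)^{-1} = \lambda(\lambda - A)^{-1} - I$ and $(\lambda - B)^{-1} B = \lambda(\lambda - B)^{-1} - I$. Expanding both terms produces a common summand $\lambda(\lambda - A)^{-1} Y (\lambda - B)^{-1}$ carrying the factor $\lambda$, and after subtraction this $\lambda$-dependent piece cancels, so the integrand collapses to $(\lambda - A)^{-1} Y - Y(\lambda - B)^{-1}$. I would then evaluate the two remaining contour integrals by the Riesz functional calculus: because $\Gamma$ encircles all of $\sigma(A)$, the integral $\tfrac{1}{2\pi i}\int_{\Gamma}(\lambda - A)^{-1}\,d\lambda$ is the identity, so the first term contributes $Y$; because $\Gamma$ encloses none of $\sigma(B)$, the resolvent $(\lambda - B)^{-1}$ is holomorphic inside $\Gamma$ and Cauchy's theorem gives $\tfrac{1}{2\pi i}\int_{\Gamma}(\lambda - B)^{-1}\,d\lambda = 0$, so the second term vanishes. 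This yields $AX - XB = Y$.

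For uniqueness I would show that any $Z$ with $AZ = ZB$ is zero. Iterating gives $A^{n}Z = Z B^{n}$, hence $p(A)Z = Z\,p(B)$ for every polynomial $p$, and passing through the holomorphic functional calculus, $f(A)Z = Z\,f(B)$ for every $f$ holomorphic on a neighborhood of $\sigma(A)\cup\sigma(B)$. Choosing $f$ identically $1$ near $\sigma(A)$ and $0$ near $\sigma(B)$ — which is possible since these are disjoint compact sets — forces $Z = f(A)Z = Z\,f(B) = 0$. The main obstacle is arranging the resolvent computation so that the $\lambda$-weighted terms cancel cleanly and the integrand reduces to a difference of single resolvents; once that reduction is in hand, the Riesz-calculus evaluations and the uniqueness argument are routine. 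A minor point to record is that the integral is independent of the admissible contour $\Gamma$, which follows from the holomorphy of $(\lambda - A)^{-1} Y (\lambda - B)^{-1}$ off $\sigma(A)\cup\sigma(B)$.
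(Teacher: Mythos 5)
Your argument is correct, but it is genuinely different from what the paper does. The paper's ``proof'' of this theorem never verifies the formula: it simply recalls the solution of the more general pencil equation $AXB-CXD=E$, asserting that when the generalized spectra $\sigma(C,A)=\{\lambda\in\mathbb{C}:\ \lambda C-A \ \mbox{not invertible}\}$ and $\sigma(B,D)$ are disjoint one has $X=\frac{1}{2\pi i}\int_{\Gamma}(\lambda C-A)^{-1}Y(\lambda B-D)^{-1}\,d\lambda$, the stated theorem being the specialization in which the two extra coefficients are the identity; no direct computation and no uniqueness argument are given, so the proof is in effect a citation of the pencil-equation result. You instead give the classical self-contained Rosenblum argument: abstract solvability from $\sigma(L_A-R_B)\subseteq\sigma(A)-\sigma(B)$, which avoids $0$; existence by substituting the integral and using $A(\lambda-A)^{-1}=\lambda(\lambda-A)^{-1}-I$ and $(\lambda-B)^{-1}B=\lambda(\lambda-B)^{-1}-I$ so the integrand collapses to $(\lambda-A)^{-1}Y-Y(\lambda-B)^{-1}$, then $\frac{1}{2\pi i}\int_{\Gamma}(\lambda-A)^{-1}\,d\lambda=I$ and $\frac{1}{2\pi i}\int_{\Gamma}(\lambda-B)^{-1}\,d\lambda=0$; uniqueness by a holomorphic function equal to $1$ near $\sigma(A)$ and $0$ near $\sigma(B)$. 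What your route buys is an actual verification of the formula and of uniqueness, neither of which the paper supplies; what the paper's route buys, granting the cited result, is greater generality (it also covers $AXB-CXD=E$, which is then reused in their application). One refinement: in your uniqueness step, the passage from $p(A)Z=Zp(B)$ for polynomials to $f(A)Z=Zf(B)$ for all holomorphic $f$ should be made via the resolvent intertwining $(\lambda-A)^{-1}Z=Z(\lambda-B)^{-1}$ (polynomial approximation can fail on a disconnected neighborhood of $\sigma(A)\cup\sigma(B)$ unless its complement is connected); alternatively, uniqueness already follows from the invertibility of $\delta_{A,B}$ established in your first paragraph.
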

 \begin{proof}
 We consider the equation $AXB-CXD=E$. If the spectrum $\sigma(C, A)$ and the spectrum $\sigma(B,D)$ are disjoint,
            then
            $$X= \frac{1}{2\pi i}\int_{\Gamma}(\lambda C-A)^{-1}Y(\lambda B-D)^{-1}d\lambda,\eqno(*)$$
            where $\Gamma$ is a Cauchy contour which separate the spectrum of $\lambda C-A$ and the spectrum of $\lambda B-D$, where
            $\sigma(C, A)$ is  the set of $\lambda\in\mathbb{C}$ such that $\lambda C- A$ is not invertible.
        \end{proof}

		\begin{theorem}
			Let $ T_{1} : X_{1}\rightarrow X_{2}$ and $T_{2} : X_{1} \rightarrow X_{2}$  two linear operators between Banach spaces and their spectrum in the open unit disque. Then, if $Y \in  L(X_{2};X_{1})$, the equation
			$$T_{1}XT_{2} -X = Y $$
			Has a unique solution  $X \in  L(X_{2};X_{1})$, given by
			$$X =-\sum^{\infty}_{k=0} T_1^{k}YT_2^{k}\eqno(1)$$
		
	\end{theorem}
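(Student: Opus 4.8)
The plan is to treat this as a discrete Sylvester (Stein) equation and to exhibit the series in (1) as a genuinely convergent Neumann-type series whose sum solves the equation, and then to establish uniqueness by an iteration estimate. The ambient space $L(X_2;X_1)$ is complete because $X_1$ is a Banach space, so absolute convergence in operator norm will be enough to make the infinite sum meaningful.

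First I would extract quantitative decay from the spectral hypotheses. Since $\sigma(T_1)$ and $\sigma(T_2)$ lie in the open unit disc, the spectral radii satisfy $r(T_1)<1$ and $r(T_2)<1$. By Gelfand's spectral radius formula $r(T_i)=\lim_{k\to\infty}\|T_i^{k}\|^{1/k}$, so fixing $\rho$ with $\max\{r(T_1),r(T_2)\}<\rho<1$ yields a constant $C>0$ such that $\|T_1^{k}\|\le C\rho^{k}$ and $\|T_2^{k}\|\le C\rho^{k}$ for all $k\ge 0$. Consequently $\|T_1^{k}YT_2^{k}\|\le C^{2}\,\|Y\|\,\rho^{2k}$, and since $\rho^{2}<1$ the geometric majorant $\sum_{k\ge 0}\rho^{2k}$ converges. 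Hence $\sum_{k=0}^{\infty}T_1^{k}YT_2^{k}$ converges absolutely in $L(X_2;X_1)$, and $X:=-\sum_{k=0}^{\infty}T_1^{k}YT_2^{k}$ is a well-defined bounded operator.

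Next I would verify that this $X$ solves $T_1XT_2-X=Y$. Because left multiplication by $T_1$ and right multiplication by $T_2$ are continuous on $L(X_2;X_1)$ and the series converges in norm, I may pass these multiplications through the sum term by term, obtaining $T_1XT_2=-\sum_{k=0}^{\infty}T_1^{k+1}YT_2^{k+1}=-\sum_{j=1}^{\infty}T_1^{j}YT_2^{j}$. Subtracting $X=-\sum_{k=0}^{\infty}T_1^{k}YT_2^{k}$ cancels every term except the one with index $0$, so $T_1XT_2-X=T_1^{0}YT_2^{0}=Y$, as required.

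For uniqueness I would show the homogeneous equation has only the trivial solution. If $Z$ satisfies $T_1ZT_2-Z=0$, then $Z=T_1ZT_2$, and iterating gives $Z=T_1^{k}ZT_2^{k}$ for every $k$; the bound above then yields $\|Z\|\le C^{2}\rho^{2k}\|Z\|\to 0$, forcing $Z=0$. Applying this to the difference of two solutions gives uniqueness. The only genuinely delicate point is the first step, namely converting the qualitative spectral condition into the uniform exponential bound $\|T_i^{k}\|\le C\rho^{k}$, which relies on Gelfand's formula; once that bound is secured, convergence, verification, and uniqueness are all routine.
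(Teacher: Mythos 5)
Your proof is correct, but it follows a genuinely different route from the paper's. The paper deduces the theorem from its preceding contour-integral result for the equation $AXB-CXD=E$: it takes $\lambda C-A=\lambda I_{1}-T_{1}$ and $\lambda B-D=\lambda T_{2}-I_{2}$, notes that $\sigma(C;A)=\sigma(T_{1})$ lies inside the open unit disc while $\sigma(B;D)=\{\lambda^{-1}:\lambda\in\sigma(T_{2})\}$ lies outside it, so the unit circle $\omega$ separates the two spectra, writes the solution as $X=\frac{1}{2\pi i}\int_{\omega}(\lambda I_{1}-T_{1})^{-1}Y(\lambda T_{2}-I_{2})^{-1}\,d\lambda$, and then expands $(\lambda I_{1}-T_{1})^{-1}=\sum_{k\ge 1}\lambda^{-k}T_{1}^{k-1}$ and $(\lambda T_{2}-I_{2})^{-1}=-\sum_{k\ge 0}\lambda^{k}T_{2}^{k}$ on $\omega$, integrating term by term to identify the integral with the series (1). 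You instead work directly on the Stein equation: Gelfand's spectral radius formula converts the spectral hypothesis into geometric bounds $\|T_{i}^{k}\|\le C\rho^{k}$ with $\rho<1$, which give absolute convergence of the series, a telescoping verification that its sum solves $T_{1}XT_{2}-X=Y$, and uniqueness by iterating $Z=T_{1}^{k}ZT_{2}^{k}$ for a solution of the homogeneous equation. Your argument is more elementary and self-contained: it proves existence and uniqueness without appealing to the earlier theorem (whose proof in the paper is only sketched), and it makes the uniqueness step explicit, which the paper leaves implicit in the word ``unique.'' The paper's approach, on the other hand, situates the result in the general resolvent-integral framework for $AXB-CXD=E$, which carries over to other configurations in which the two relevant spectra are separated by a Cauchy contour. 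Both arguments tacitly read the hypotheses in the only workable way, namely $T_{1}$ acting on $X_{1}$ and $T_{2}$ acting on $X_{2}$, so that the products $T_{1}^{k}YT_{2}^{k}$ are defined.
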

		\begin{proof}
			We apply the theorem with  $\lambda C-A = \lambda I_{1}-T_{1}$ and $\lambda B-D = \lambda T_{2}- I_{2}$ .
			where $I_{k}$ is the identity operator on $X_{k}(k = 1; 2)$. Clearly,
			$\sigma(C;A) =\sigma(T_{1})$ and $\sigma(B;D) = \{\lambda^{-1}: \lambda\in\sigma (T_{2})\}$.
			Since $\sigma (T_{1})$ and  $\sigma(T_{2})$ are in the open unit disk, the spectrum $\sigma (C;A)$ and  $\sigma(B;D)$ are disjoint, and the precedent theorem  imply that our equation has a unique solution given by

			$$X =\frac{1}{2\pi i}\int_{\omega}(\lambda I_{1}-T_{1})^{-1}Y(\lambda T_{2}- I_{2})d\lambda \eqno(2).$$
			Here $\omega$ is the unit circle  with anti-clockwise orientation. We have
			$$(\lambda I_{1}-T_{1})^{-1}=\sum^{\infty}_{k=1}\frac{1}{\lambda ^{k}} T^{k-1}_{1}$$
				$$(\lambda T_{2}-I_{2})^{-1}= -\sum^{\infty}_{k=0}\lambda ^{k}T_{2}^{k}.$$
				Since $ \sigma(T_{1})$ and $\sigma(T_{2})$ are in the open unit disque.\\ It results that the right sides of (1) and (2) are equal.
			\end{proof}
	It is well known $\sigma(A)\cap\sigma(B)=\emptyset$, then  the equation
$AX-XB = Y$ has a unique solution for all $ Y$. Which comes from the fact that for an operator
 $\delta_{A,B} : B(H) \rightarrow B(H)$, defined by  $\delta_{A,B}(X) =AX-XB$,  we have il $\sigma(\delta_{A, B}) \subset  \sigma(A) + \sigma(B)$.
  It is also possible to find an explicit formula of this solution with respect to $ Y$.
		
		\begin{theorem}
			1)	If $\sigma(A)\subset \{z: Re(z) > 0\}$ and $\sigma(B) \subset \{z: Re(z)<0\}$, then
			$$X= \int^{\infty}_{0}e^{-tA}Ye^{tB} dt.$$
			2)	If $\sigma(A)\subset \{z: |z|>\rho\}$ and $\sigma(B) \subset \{z: |z|< \rho\}$, then $$ X= \sum^{\infty}_{n=0}A^{-n-1}YB^{n}.$$
		\end{theorem}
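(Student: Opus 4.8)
The plan is to handle the two parts in parallel: in each case I would first check that the proposed $X$ is a well-defined bounded operator (absolute convergence of the integral, resp.\ the series, in operator norm), then verify by direct computation that it solves $AX-XB=Y$, and finally appeal to spectral disjointness for uniqueness. Note that under either hypothesis $\sigma(A)\cap\sigma(B)=\emptyset$, so by the inclusion $\sigma(\delta_{A,B})\subset\sigma(A)-\sigma(B)$ recalled above, $0\notin\sigma(\delta_{A,B})$ and $\delta_{A,B}$ is invertible; hence once a solution is exhibited it is automatically the unique one, for every $Y$. This disposes of uniqueness in both cases at once.

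For part 1 the first task is convergence. Since $\sigma(A)$ is compact and contained in $\{\operatorname{Re}z>0\}$, there is $\alpha>0$ with $\sigma(A)\subset\{\operatorname{Re}z\geq\alpha\}$, and likewise $\sigma(B)\subset\{\operatorname{Re}z\leq-\beta\}$ for some $\beta>0$. Because $A$ and $B$ are bounded, the exponential semigroups are norm-continuous, so their growth is governed by the spectral abscissa: there are constants $M,N$ with $\|e^{-tA}\|\leq Me^{-\alpha t}$ and $\|e^{tB}\|\leq Ne^{-\beta t}$ for $t\geq0$. This yields $\|e^{-tA}Ye^{tB}\|\leq MN\|Y\|e^{-(\alpha+\beta)t}$, so the integral converges absolutely in operator norm and defines a bounded $X$. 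To see that it solves the equation, set $f(t)=e^{-tA}Ye^{tB}$ and differentiate, using $\frac{d}{dt}e^{-tA}=-Ae^{-tA}$ and $\frac{d}{dt}e^{tB}=e^{tB}B$, to get $f'(t)=-Af(t)+f(t)B$. Integrating over $[0,\infty)$ and using $f(0)=Y$ and $f(\infty)=0$ (the latter from the decay bound) gives $-Y=-AX+XB$, that is $AX-XB=Y$; the interchange of $A$ and $B$ with the integral is justified precisely by the absolute convergence just established.

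For part 2, observe first that $0\notin\sigma(A)$ forces $A$ to be invertible, with $\sigma(A^{-1})\subset\{|z|<1/\rho\}$, while $\sigma(B)\subset\{|z|<\rho\}$. Hence the spectral radii obey $r(A^{-1})\,r(B)<(1/\rho)\cdot\rho=1$; picking $\theta$ with $r(A^{-1})r(B)<\theta<1$ and applying the spectral radius formula to $A^{-1}$ and to $B$ gives $\|A^{-n-1}YB^{n}\|\leq C\theta^{n}\|Y\|$ for large $n$, so the series converges absolutely in norm. The verification is then purely formal: term by term $AX=\sum_{n\geq0}A^{-n}YB^{n}$ and $XB=\sum_{n\geq0}A^{-n-1}YB^{n+1}$, and reindexing the second sum as $\sum_{m\geq1}A^{-m}YB^{m}$ cancels everything except the $n=0$ term, leaving $AX-XB=Y$.

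I expect the only genuine obstacle to be the analytic input in part 1, namely the passage from the location of $\sigma(A)$ to the exponential decay estimate $\|e^{-tA}\|\leq Me^{-\alpha t}$: this is the standard fact that for a bounded operator the growth bound of the exponential semigroup coincides with its spectral abscissa, and I would cite it rather than reprove it. Part 2 is comparatively soft, its entire content being the inequality $r(A^{-1})r(B)<1$ that forces geometric convergence, after which the telescoping producing $Y$ is immediate.
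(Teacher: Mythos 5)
Your proof is correct and takes essentially the same route as the paper: spectral separation yields the exponential (resp.\ geometric) decay that guarantees convergence, and the equation is then verified by recognizing the integrand of $AX-XB$ as $-\frac{d}{dt}\bigl(e^{-tA}Ye^{tB}\bigr)$ and applying the fundamental theorem of calculus in case 1, and by telescoping the series in case 2. Your explicit uniqueness argument via invertibility of $\delta_{A,B}$ and the more careful growth-bound estimates only sharpen what the paper leaves implicit.
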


		\begin{proof}
			Since the spectrum $\sigma(A)$ and $\sigma(B)$ are compact sets, hence $\sigma(A)$ and $\sigma(B)$ can be separated by a strip of positive width $d$, in the first case and in the second case $\sigma(A)$ is contained in a disk of radius $\rho_{1}$ and $\sigma(B)$ is contained in the compliment of a disk of radius $\rho_{2}$. We deduce that for all $t\geq 0$, $||e^{-tA}Ye^{tB}||\leq e^{-td}$
			and for all $n\geq 0$, $||A^{-n-1}YB^{n}||\leq
			(\frac{\rho_{1}}{\rho_{2}})^{n}$. It results that the series and the integral converge.
			Furthmore,
	
			1.
$$	AX-XB=\int^{\infty}_{0}Ae^{-tA}Ye^{tB} -\int^{\infty}_{0}e^{-tA}Ye^{tB}Bdt$$
			$$= \int^{\infty}_{0}\frac{d}{dt}(-e^-{tA}Ye^{tB})dt=Y$$
		2.
$$  AX-XB=\sum^{\infty}_{n=0}A^{-n}YB^{n}-\sum^{\infty}_{n=0}A^{-n-1}YB^{n+1}=Y$$
	\end{proof}
	
        \subsection{Equation of Monkeypox Diseases}
            Is an equation of the form $ A^{*}XA+ tAXA = Y,$
            where $A$  represented a known influence defined in the Hilbert space $H$ and
            represents the causal and auxiliary factors of Smallpox (factor
            hormonal, nutritional factor, hereditary factor and virus …), and $Y$
            is also a well-known effect defined in the Hilbert space H and
            represents the relationships of factors that contribute to Smallpox (variole)
            (Sex, age, exposure to radiation, use of sedatives, psychological stress, factors that weaken immunity, etc.)
        $t$ is a positive real number called control factor and
            the unknown influencer $X$. This influencer represents the treatment in
            terms of quantity and quality for the patient.

            For the solution we take in $(*)$, $A=A^{*},B=A, C=D=A$. If $A=I$, we have $X= \frac{1}{t+1}Y$.
            if $\sigma(\lambda tA-A^{*})\cap\sigma(\lambda A-A))=\emptyset$,
           $$X= \frac{1}{2\pi i}\int_{\Gamma}(\lambda tA-A^{*})^{-1}Y(\lambda A-A)^{-1}d\lambda,\eqno(*)$$
\section{Some Problems}
The operator $A \in B(H)$ is said to be finite [16] if $||I - (AX - XA)|| \geq 1$ (*) for all $X\in B(H)$, where $ I$ is the identity operator. The well-known
inequality (*), due to [16] is the starting point of the topic of commutator
approximation (a Topic which has its roots in quantum theory [17]). This
topic deals with minimizing the distance, measured by some norm or other,
between a varying commutator (or self-commutator $XX^{*}-X^{*}X$) and some
fixed operator [1, 6, 8] we begin by the definition of the best approximant of an operator. Let
$E$ be a normed space and $M$ a subspace of $E$. If to each $A \in E$ there exists
an operator $B \in M$ for which $||A - B||\leq||A - C||$ for all $C \in M$.
Such $B$ (if they exist) are called best approximants to $A$ from $M$. To
approach the concept of an approximant consider a set of mathematical
objects (complex numbers, matrices or linear operator, say) each of which
is, in some sense," nice", i.e. has some nice property P (being real or self-adjoint,
say): and let $A$ be some given, not nice, mathematical object: then
a $P$ best approximants of $A$ is a nice mathematical object that is" nearest
" to $A$. Equivalently, a best approximant minimizes the distance between
the set of nice mathematical objects and the given, not nice object.
Of course, the terms" mathematical object"," nice"," nearest", vary
from context to context. For a concrete example, let the set of mathematical
objects be the complex numbers, let" nice" = real and let the distance be
measured by the modulus, then the real approximant of the complex number
$z$ is the real part of it, $Rez = {(z+\overline{z})\over{2}}$.
2 .  Thus, for all real $x$
$$|z - Rez|\leq |z - x|. $$
\subsection{ Problem I}
We define the von Neumann-Schatten classes $C_{p}$ ($1\leq p < \infty$).
Let $B(H)$ denote the algebra of all bounded linear operators on a complex separable Hilbert space $H$ and let $ T \in B(H)$ be compact, and
let $s_{1}(T) \geq s_{2}(T) \geq\cdots \geq 0$ denote the singular values of $T$, i.e., the
eigenvalues of $|T| = (T^{*}T)^{1\over 2}$ arranged in their decreasing order. The operator $T$ is said to be belong to the Schatten $p$-classes $C_{p}$ if
$$||T||_{p} =[\sum_{i=0}^{\infty}s_{i}(T)^{p}]^{1\over p}=[tr(T)^{p}]^{1\over p}, 1\leq p<\infty$$
where tr denotes the trace functional. Hence $C_{1}$ is the trace class, $C_{2}$
is the Hilbert-Schmidt class, and $C_{\infty}$ is the class of compact operators
with $||T||_{\infty}= s_{1}(T)= sup\{||Tx||:\, ||x||=1\}$ denoting the usual operator norm.

The related topic of approximation by commutators $AX-XA$ or by generalized
commutator $AX-XB$, which has attracted much interest, has its
roots in quantum theory. The Heisenberg Uncertainly principle may be
mathematically formulated as saying that there exists a pair $A, X$ of linear
transformations and a non-zero scalar $\alpha$ for which
$$AX - XA = \alpha I \eqno (3.1)$$
Why to solve the operator equation $AX-XB = C.$
Clearly, (3.1) cannot hold for square matrices $A$ and $X$ and for bounded
linear operators $A$ and $X$. This prompts the question:
how close can $AX - XA$ be the identity?
Williams [17] proved that if $A$ is normal, then, for all $X in B(H)$,
$$||I-(AX - XA) ||\geq||I||. \eqno(3.2)$$
Mecheri [7] generalized Williams inequality (3.2): he proved that if $A, B$
are normal, then for all $X \in B(H)$
$$||I -(AX - XB)||\geq ||I||. \eqno(3.3)$$
Anderson [1] generalized Williams inequality (3.2): he proved that if $A$
is normal and commutes with $B$ then, for all $X\in B(H)$
$$||B -(AX - XA)|| \geq||B||. \eqno(3.4)$$
Maher [6] obtained the $C_{p}$ variants of Anderson's result. Mecheri [8] studied
approximation by generalized commutators $AX-XC$: he showed that the
following inequality holds
$$||B -(AX - XC)||_{p}  \geq||B||_{p}.\eqno (3.5)$$
for all $X \in C_{p}$ if and only if $B\in ker\delta_{A,B}$.
In the above inequalities (3.2),(3.3), (3.4) and (3.5) the zero commutator
is a commutator approximant in $C_{p}$ of $B$.
\subsection{Problem II}
Let $\delta_{A}$ be the operator defined on $B(H)$ by
$\delta_{A}(X) = AX - XA$. It is known that I is not commutator, that is, $ I\not\in R( A)$. Anderson [1] proved that there exists $A\in B(H)$ such that $I \in R(A)$, that is, the distance from $I$ to $AX - XA$ is minimal, i.e., equal to zero. For more details see Mecheri [7]
In [8] We constructed a pair $(A, X)$ of elements in $B(H)$ such that
$dist(I,R(A)) < 1.$
Open question: Does $ dist(I,R(A)) = r \in (0, 1)$ implies for all invertible
$S$ that $dist(I,R(SAS^{-1})) = r \in (0, 1)$.
\subsection{Problem III}
Let $J_{A}(H) = \{A \in B(H) :\, I \in \overline{R(A)}\}.$
Here is a problem that might of interest. Recall [5] if $T : X \rightarrow Y$ define
$$T = \{lim_{n}Tx_{n} : sup_{n}||x_{n}|| < 1\},$$
where $X, Y$ are Banach spaces differing from the usual closure in that its
points have to be the limits of images of bounded sequences of vectors so:\\
\textbf{Question}. For which operators  $A$ on Hilbert space $H$ do we have
$I \in \overline{R(A)}$ ?

\end{document}